\providecommand{\U}[1]{\protect\rule{.1in}{.1in}}
\newtheorem{theorem}{Theorem}[section]
\theoremstyle{definition}
\newtheorem{dfn}[theorem]{Definition}
\theoremstyle{remark}
\newtheorem{remark}[theorem]{Remark}
\newcommand{\conjC}{\mathbb{C}}
\newcommand{\conjR}{\mathbb{R}}
\newcommand{\conjN}{\mathbb{N}}
\DeclareMathOperator{\na}{NA }
\DeclareMathOperator{\interior}{int }
\title{Stability results of the Bishop-Phelps-Bollobás property and the generalized AHSP \thanks{The authors contributed equally to this work.}}
\author{\footnote{Corresponding author}\, $^1$Thiago Grando \orcidlink{0000-0002-7590-7358}, \footnote{Corresponding author}\, $^2$Elisa R. Santos \orcidlink{0000-0002-4575-3001}}
\affil{$^1$ \small Department of Mathematics, Midwestern Paraná State University, \\ Guarapuava, Paraná, Brazil.
\texttt{tgrando@unicentro.br}}
\affil{$^2$Institute of Mathematics and Statistics, Federal University of Uberl\^andia, \\ Uberlândia, Minas Gerais, Brazil.
\texttt{elisars@ufu.br}}
\date{ }
\begin{document}

\maketitle

\begin{abstract}
In this paper, we study the Bishop-Phelps-Bollobás property for operators (BPBp for short). To this end, we investigate the generalized approximate hyperplane series property (generalized AHSP for short) for a pair $(X,Y)$ of Banach spaces, which characterizes when $(\ell_1(X),Y)$ has the BPBp. We prove the following results. For a locally compact Hausdorff space $L$, if $(X, \mathcal{C}_0(L,Y))$ has the BPBp, then so does $(X,Y)$. Furthermore, if the pair $(X, Y)$ has the generalized AHSP and $\mathcal{L}(X,Z) = \mathcal{K}(X,Z)$, then the pair $(X, Z)$ also has the generalized AHSP, where $Z$ is one of the spaces $\mathcal{C}(K, Y)$, $\mathcal{C}_0(L, Y)$, or $\mathcal{C}_b(\Omega, Y)$, with $K$ a compact Hausdorff space and $\Omega$ a completely regular space.
\end{abstract}

\

\textsc{Keywords:} Bishop-Phelps-Bollobás property, generalized approximate hyperplane series property, Banach spaces of continuous functions, norm attaining operators.

\

\textsc{MSC 2020:} Primary 46B20; secondary 46B25, 46E40.

%%%%
\section{Introduction} 

Let $X$ and $Y$ be real or complex Banach spaces. We denote by $B_X$ the closed unit ball, by $S_X$ the unit sphere, and by $X^*$ the topological dual of $X$. We write $\mathcal{L}(X,Y)$ for the Banach space of all bounded linear operators from $X$ into $Y$, endowed with the operator norm. An operator $T \in \mathcal{L}(X,Y)$ is said to be \textit{norm attaining} if there exists $x \in X$ with $\|x\| = 1$ such that $\|Tx\| = \|T\|$. We denote by $\na(X,Y)$ the set of all norm attaining operators from $X$ into $Y$. An operator $T \in \mathcal{L}(X,Y)$ is called \textit{compact} if the set $\overline{T(B_X)}$ is compact in $Y$. The space of all compact operators from $X$ into $Y$ is denoted by $\mathcal{K}(X,Y)$.

The classical Bishop–Phelps theorem \cite{BP1961} states that if $Y$ is one-dimensional, then $\na(X,Y)$ is norm dense in $X^*$. In the same article, the authors asked whether, in general, $\overline{\na(X,Y)} = \mathcal{L}(X,Y)$. Lindenstrauss \cite{L1963} showed that there exist Banach spaces $X$ and $Y$ for which $\na(X,Y)$ is not dense in $\mathcal{L}(X,Y)$. To obtain positive results regarding the question raised in \cite{BP1961}, he introduced properties $A$ and $B$ for Banach spaces. A Banach space $X$ is said to have \textit{property $A$} if $\na(X,Y)$ is dense in $\mathcal{L}(X,Y)$ for every Banach space $Y$. Similarly, a Banach space $Y$ has \textit{property $B$} if $\na(X,Y)$ is dense in $\mathcal{L}(X,Y)$ for every Banach space $X$. Among other results, Lindenstrauss proved that reflexive spaces have property $A$, as does $\ell_1$, and that $c_0$, $\ell_\infty$, and finite-dimensional polyhedral spaces are examples of Banach spaces with property $B$. We refer the reader to the survey papers \cite{A1999} and \cite{M2016} for further background on the topic of norm attaining operators.

Afterwards, Bollobás \cite{B1970} showed that if $X$ is a Banach space and $0 < \varepsilon < 1$, then for any $x \in B_X$ and $x^* \in S_{X^*}$ satisfying $\left|1 - x^*(x)\right| < \frac{\varepsilon^2}{4}$, there exist $y \in S_X$ and $y^* \in S_{X^*}$ such that $y^*(y) = 1$, $\|x - y\| < \varepsilon$, and $\|x^* - y^*\| < \varepsilon$. This result is known as the Bishop–Phelps–Bollobás theorem.

In 2008, Acosta et al. \cite{AAGM08} initiated the study of this theorem in the context of operators between Banach spaces $X$ and $Y$, and introduced the Bishop–Phelps–Bollobás property for operators.

\begin{dfn}\label{def-BPBP-operadores}
A pair of Banach spaces $(X, Y)$ is said to have the \textit{Bishop-Phelps-Bollobás property for operators} (BPBp for short) if, given $\varepsilon > 0$, there is $\eta(\varepsilon) > 0$ such that for every $T \in S_{\mathcal{L}(X,Y)}$ and $x_0 \in S_{X}$ satisfying
$$\Vert  T x_0 \Vert > 1- \eta(\varepsilon),$$
there exist an operator $S \in S_{{\mathcal{L}(X,Y)}}$ and a point $u_0 \in S_{X}$ such that
$$
\| Su_0 \| = 1, \quad \Vert u_0 - x_0 \Vert < \varepsilon \quad \text{ and } \quad \Vert S-T \Vert < \varepsilon.
$$
In this case, we say that the pair $(X, Y)$ has the BPBp with function $\varepsilon \mapsto \eta(\varepsilon)$.
\end{dfn}

We refer the paper \cite{DGMO} for developments regarding the Bishop-Phelps-Bollobás property for operators.

A property called approximate hyperplane series property (AHSP for short) was introduced by Acosta et al. \cite{AAGM08} to characterize Banach spaces $Y$ for which the pair $(\ell_1, Y)$ has the BPBp.

\begin{dfn}\label{ahsp}
A Banach space $Y$ is said to have the property \textit{AHSP} if, for every $\varepsilon > 0$, there exists $0 < \eta(\varepsilon) < \varepsilon$ such that for every sequence $(y_k) \subset S_Y$ and every convex series $\sum_{k=1}^\infty \alpha_k$ with
$$\left\| \sum_{k=1}^\infty \alpha_k y_k \right\| > 1 - \eta(\varepsilon),$$
there exist a subset $A \subset \conjN$, $y^* \in S_{Y^*}$, and a subset $\{z_k : k \in A\} \subset S_Y$ satisfying the following:
\begin{enumerate}
    \item[(1)] $\sum_{k \in A} \alpha_k > 1 - \varepsilon$,
    \item[(2)] $\|z_k - y_k\| < \varepsilon$ for all $k \in A$,
    \item[(3)] $y^*(z_k) = 1$ for every $k \in A$.
\end{enumerate}
\end{dfn}

This property holds in various Banach spaces, including finite-dimensional spaces, uniformly convex spaces, spaces with property~$\beta$, and lush spaces \cite{AAGM08,CK12}.

In 2015, Kim et al. \cite{KLM15} introduced the notion of the generalized approximate hyperplane series property (generalized AHSP for short) for a pair $(X,Y)$ of Banach spaces. This property characterizes when the pair $(\ell_1(X), Y)$ has the BPBp, where $\ell_1(X)$ denotes the Banach space of all sequences $(x_k)$ in $X$ such that the series $\sum_{k=1}^\infty\|x_k\|$ converges.

\begin{dfn}\label{gahsp}
A pair of Banach spaces $(X, Y)$ is said to have the \textit{generalized AHSP} if, for every $\varepsilon > 0$, there exists $0 < \eta(\varepsilon) < \varepsilon$ such that given sequences $(T_k) \subset S_{\mathcal{L}(X,Y)}$ and $(x_k) \subset S_X$, and a convex series $\sum_{k=1}^\infty \alpha_k$ such that
$$\left\| \sum_{k=1}^\infty \alpha_k T_k x_k \right\| > 1 - \eta(\varepsilon),$$
there exist a subset $A \subset \conjN$, $y^* \in S_{Y^*}$, and sequences $(S_k) \subset S_{\mathcal{L}(X,Y)}$, $(z_k) \subset S_X$ satisfying the following:
\begin{enumerate}
    \item[(1)] $\sum_{k \in A} \alpha_k > 1 - \varepsilon$,
    \item[(2)] $\|z_k - x_k\| < \varepsilon$ and $\|S_k - T_k\| < \varepsilon$ for all $k \in A$,
    \item[(3)] $y^*(S_k z_k) = 1$ for every $k \in A$.
\end{enumerate}
\end{dfn}

The authors \cite{KLM15} provided several examples of pairs $(X,Y)$ of Banach spaces that satisfy the generalized AHSP. For instance, if both $X$ and $Y$ are finite-dimensional, then the pair $(X,Y)$ has the generalized AHSP. Moreover, for any Hilbert space $H$, if the pair $(X,H)$ has the BPBp, then it also has the generalized AHSP. Later, in 2022, Kim and Lee \cite{KL22} studied this property for $Y$-valued function spaces over a base algebra $A$.

Kim et al. \cite{KLM15} also presented some important observations that will be useful in the present paper.

\begin{remark}\cite[Remark 5]{KLM15}
Let $X, Y$ be Banach spaces.
\begin{enumerate}
    \item [(a)] To show that the pair $(X,Y)$ has the generalized AHSP it is enough to check the conditions for every finite (but of arbitrarily length) convex series, with the same function $\varepsilon \mapsto \eta(\varepsilon)$ for all lengths.
    \item[(b)] In the definition of the generalized AHSP we may consider sequences $(T_k) \subset B_{\mathcal{L}(X,Y)}$ and $(x_k) \subset B_X$ (with a small change in the function $\eta(\varepsilon)$).
    \item[(c)] If $(X,Y)$ has the generalized AHSP, then $(X,Y)$ has the BPBp.
    \item[(d)] Also, if $(X,Y)$ has the generalized AHSP, then $Y$ has the AHSP. 
\end{enumerate}
\end{remark}

Given a set $\Gamma$ and a Banach space $X$, we write $B(\Gamma, X)$ to denote the Banach space of all bounded functions from $\Gamma$ into $X$, endowed with the supremum norm. We will consider several of its closed subspaces.

For a compact Hausdorff space $K$, we denote by $\mathcal{C}(K, X)$ the space of all continuous functions from $K$ into $X$. For a locally compact Hausdorff space $L$, we write $\mathcal{C}_0(L, X)$ for the Banach space of all continuous functions from $L$ into $X$ that vanish at infinity. For a completely regular space $\Omega$, we denote by $\mathcal{C}_b(\Omega, X)$ the Banach space of all bounded continuous functions from $\Omega$ into $X$.

Let $Z$ be any of the above subspaces of $B(\Gamma, X)$ and $t \in \Gamma$. We write $\delta_t(f) = f(t)$ for every $f \in Z$.

In this paper, we investigate pairs $(X,Y)$ that satisfy the BPBp and/or the generalized AHSP. For a locally compact Hausdorff space $L$, we show that the pair $(X,Y)$ has the BPBp whenever $(X, \mathcal{C}_0(L,Y))$ has the BPBp. However, the converse does not hold. We prove that if the pair $(X,Y)$ has the generalized AHSP and $\mathcal{L}(X,Z) = \mathcal{K}(X,Z)$, then the pair $(X,Z)$ has the generalized AHSP when $Z$ is one of the following spaces: $\mathcal{C}(K,Y)$ for any compact Hausdorff space $K$; $\mathcal{C}_0(L,Y)$ for any locally compact Hausdorff space $L$; and $\mathcal{C}_b(\Omega,Y)$ for any completely regular space $\Omega$.  We use techniques from \cite{ACKLM15} and \cite{CGKM14} to obtain these results.

%%%%
\section{Results}

We begin this section by recalling a very interesting result from \cite{ACKLM15}.

\begin{theorem}\textnormal{\cite[Proposition 2.8]{ACKLM15}}\label{Prop2.8}
    Let $X$, $Y$ be Banach spaces and let $K$ be a compact Hausdorff space. If the pair $(X, \mathcal{C}(K,Y))$ has the BPBp, then the pair $(X,Y)$ has the BPBp.
\end{theorem}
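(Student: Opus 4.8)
The plan is to lift the whole situation from $(X,Y)$ up to $(X,\mathcal{C}(K,Y))$ via the canonical isometric embedding of $Y$ into $\mathcal{C}(K,Y)$ as constant functions, apply the BPBp we are assuming there, and then push the resulting operator back down by evaluating at a point of $K$ where the norm happens to be attained. I expect the same $\eta$ that witnesses the BPBp of $(X,\mathcal{C}(K,Y))$ to work verbatim for $(X,Y)$.

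Concretely, fix $\varepsilon>0$, let $\eta$ be the function from the BPBp of $(X,\mathcal{C}(K,Y))$, and take $T\in S_{\mathcal{L}(X,Y)}$ and $x_0\in S_X$ with $\|Tx_0\|>1-\eta(\varepsilon)$. Define $\widetilde T\colon X\to\mathcal{C}(K,Y)$ by letting $\widetilde Tx$ be the constant function with value $Tx$; since $\|\widetilde Tx\|=\sup_{k\in K}\|Tx\|=\|Tx\|$, we get $\|\widetilde T\|=\|T\|=1$ and $\|\widetilde Tx_0\|=\|Tx_0\|>1-\eta(\varepsilon)$. Applying the hypothesis to $\widetilde T$ and $x_0$ yields $\widetilde S\in S_{\mathcal{L}(X,\mathcal{C}(K,Y))}$ and $u_0\in S_X$ with $\|\widetilde S u_0\|=1$, $\|u_0-x_0\|<\varepsilon$ and $\|\widetilde S-\widetilde T\|<\varepsilon$. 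Since $K$ is compact and $k\mapsto(\widetilde S u_0)(k)$ is continuous, there is $k_0\in K$ with $\|(\widetilde S u_0)(k_0)\|=\|\widetilde S u_0\|=1$. Set $S:=\delta_{k_0}\circ\widetilde S\colon X\to Y$. Because $\|\delta_{k_0}\|=1$ (it already equals $1$ on constant functions), $\|S\|\le\|\widetilde S\|=1$, while $\|Su_0\|=\|(\widetilde S u_0)(k_0)\|=1$ forces $\|S\|=1$ with the norm attained at $u_0$. Finally, for $x\in B_X$, using $(\widetilde Tx)(k_0)=Tx$,
$$\|Sx-Tx\|=\bigl\|(\widetilde S x-\widetilde T x)(k_0)\bigr\|\le\|\widetilde S x-\widetilde T x\|\le\|\widetilde S-\widetilde T\|<\varepsilon,$$
so $\|S-T\|<\varepsilon$, and hence $(X,Y)$ has the BPBp with function $\eta$.

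The argument carries essentially no obstacle; the single delicate point is that the supremum defining $\|\widetilde S u_0\|$ is genuinely \emph{attained} at some $k_0\in K$, which is exactly where compactness of $K$ enters (an entirely analogous remark, using vanishing at infinity, would handle $\mathcal{C}_0(L,Y)$, whereas for $\mathcal{C}_b(\Omega,Y)$ the supremum need not be attained — which is presumably why the later results of the paper need an extra hypothesis such as finite-dimensionality of $X$). Beyond that, one only checks the trivialities that $\widetilde T$ is linear and bounded and that the evaluation map $\delta_{k_0}$ has norm one, both immediate.
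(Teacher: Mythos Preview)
Your proof is correct and matches the strategy the paper uses for its analogous results. The paper does not give its own proof of this particular statement (it is merely recalled from \cite{ACKLM15}), but the proofs it provides for the $\mathcal{C}_0(L,Y)$ and $\mathcal{A}^Y$ versions (Theorems~\ref{t1} and~\ref{t3}) follow exactly your scheme: embed via constant (or near-constant) functions, apply the assumed BPBp upstairs, then compose with an evaluation $\delta_{t_0}$ at a point where the new operator attains its norm.

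One small caveat about your closing parenthetical remark: for $\mathcal{C}_0(L,Y)$ the attainment of the supremum is indeed not the issue (functions vanishing at infinity do attain their norm), but the \emph{embedding} step is, since nonzero constant functions do not belong to $\mathcal{C}_0(L,Y)$ when $L$ is non-compact. The paper's proof of Theorem~\ref{t1} handles this by multiplying $Tx$ by a compactly supported bump function $\phi$ with $\phi(t_0)=1$, which costs an extra $\varepsilon/2$ in the final estimate and forces the use of $\eta(\varepsilon/2)$ rather than $\eta(\varepsilon)$. This does not affect your main argument, which is for $\mathcal{C}(K,Y)$ and is entirely sound as written.
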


The converse of the above theorem is false. Indeed, if $K = [0,1]$ and $Y=\conjR$ or $\conjC$, then $\mathcal{C}(K,Y)$ does not have the Lindenstrauss property B, as shown in \cite[Theorem A]{S83}, even though the pair $(X,Y)$ has the BPBp for every Banach space $X$, by the Bishop-Phelps theorem. However, the converse does hold for the generalized AHSP when $\mathcal{L}(X,\mathcal{C}(K,Y)) = \mathcal{K}(X,\mathcal{C}(K,Y))$. This result was demonstrated by Kim and Lee \cite{KL22} in the complex case. Next, we present an alternative proof valid for both the real and the complex case.

\begin{theorem}\label{Theo3.15}
    Let $X$, $Y$ be Banach spaces and let $K$ be a compact Hausdorff space. If the pair $(X,Y)$ has the generalized AHSP and $\mathcal{L}(X,\mathcal{C}(K,Y)) = \mathcal{K}(X,\mathcal{C}(K,Y))$, then the pair $(X, \mathcal{C}(K,Y))$ has the generalized AHSP. Equivalently, if the pair $(\ell_1(X),Y)$ has the BPBp and $\mathcal{L}(X,\mathcal{C}(K,Y)) = \mathcal{K}(X,\mathcal{C}(K,Y))$, then the pair $(\ell_1(X), \mathcal{C}(K,Y))$ has the BPBp.
\end{theorem}

\begin{proof}
    Since the pair $(X,Y)$ has the generalized AHSP, it follows that the pair $(\ell_1(X),Y)$ has the BPBp. As $\mathcal{L}(X,\mathcal{C}(K,Y)) = \mathcal{K}(X,\mathcal{C}(K,Y))$, we have $\mathcal{L}(X,Y) = \mathcal{K}(X,Y)$. Thus, by \cite[Lemma 3.10]{DGMM18}, the pair $(\ell_1(X),Y)$ has the BPBp for compact operators (see \cite[Definition 1.4]{DGMM18}). Applying \cite[Theorem 3.15]{DGMM18}, we obtain that the pair $(\ell_1(X), C(K, Y))$ has the BPBp for compact operators. Finally, using \cite[Lemma 3.10]{DGMM18} once more and the fact that $\mathcal{L}(X,\mathcal{C}(K,Y)) = \mathcal{K}(X,\mathcal{C}(K,Y))$, we conclude that the pair $(\ell_1(X), \mathcal{C}(K,Y))$ has the BPBp. Therefore, the pair $(X, \mathcal{C}(K,Y))$ has the generalized AHSP.
\end{proof}

Considering $\mathcal{C}_0(L,Y)$ instead of $\mathcal{C}(K,Y)$, we obtain analogous results.

\begin{theorem}\label{t1}
    Let $X$, $Y$ be Banach spaces and let $L$ be a locally compact Hausdorff space. If the pair $(X, \mathcal{C}_0(L,Y))$ has the BPBp, then the pair $(X,Y)$ has the BPBp.
\end{theorem}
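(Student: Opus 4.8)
My plan is to mimic the embedding argument behind Theorem~\ref{Prop2.8}. There one embeds $\mathcal{L}(X,Y)$ into $\mathcal{L}(X,\mathcal{C}(K,Y))$ via constant functions and then evaluates at a point of the compact space $K$ where a continuous function attains its maximum norm; both moves must be adapted, since on a merely locally compact $L$ the constant function $\mathbf{1}$ need not lie in $\mathcal{C}_0(L)$, and a supremum over $L$ need not be attained at a prescribed point.

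We may assume $L\neq\emptyset$ and $Y\neq\{0\}$ (otherwise $(X,Y)$ trivially has the BPBp). Fix $t_0\in L$; by local compactness and Urysohn's lemma there is $\varphi\in\mathcal{C}_0(L)$ with $0\le\varphi\le 1$ and $\varphi(t_0)=1$, so $\|\varphi\|_\infty=1$. Define $\Phi\colon\mathcal{L}(X,Y)\to\mathcal{L}(X,\mathcal{C}_0(L,Y))$ by $(\Phi(R)x)(t)=\varphi(t)\,Rx$; since $\|\varphi\|_\infty=1$ this is a linear isometric embedding, and for each $t\in L$ the evaluation $\delta_t\colon\mathcal{C}_0(L,Y)\to Y$ has norm at most $1$ and satisfies $\delta_t\circ\Phi(R)=\varphi(t)\,R$.

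Given $\varepsilon>0$, set $\varepsilon'=\varepsilon/2$ and let $\eta(\varepsilon)$ be a modulus witnessing the BPBp of $(X,\mathcal{C}_0(L,Y))$ at $\varepsilon'$. Suppose $T\in S_{\mathcal{L}(X,Y)}$ and $x_0\in S_X$ satisfy $\|Tx_0\|>1-\eta(\varepsilon)$. Then $\widetilde T:=\Phi(T)\in S_{\mathcal{L}(X,\mathcal{C}_0(L,Y))}$ and $\|\widetilde T x_0\|=\|Tx_0\|>1-\eta(\varepsilon)$, so the BPBp of $(X,\mathcal{C}_0(L,Y))$ yields $\widetilde S\in S_{\mathcal{L}(X,\mathcal{C}_0(L,Y))}$ and $u_0\in S_X$ with $\|\widetilde S u_0\|=1$, $\|u_0-x_0\|<\varepsilon'$, and $\|\widetilde S-\widetilde T\|<\varepsilon'$. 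The function $g:=\widetilde S u_0\in S_{\mathcal{C}_0(L,Y)}$ vanishes at infinity, so $\{t\in L:\|g(t)\|\ge 1/2\}$ is compact and nonempty and the continuous map $t\mapsto\|g(t)\|$ attains there its maximum, namely $1$, at some $t_1\in L$. Then $\|(\widetilde T u_0)(t_1)\|\ge\|g(t_1)\|-\|\widetilde S-\widetilde T\|>1-\varepsilon'$, that is, $\varphi(t_1)\,\|Tu_0\|>1-\varepsilon'$; since $\|Tu_0\|\le1$ this forces $\varphi(t_1)>1-\varepsilon'$.

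Finally, put $S:=\delta_{t_1}\circ\widetilde S\in\mathcal{L}(X,Y)$. Then $\|S\|\le 1$ and $\|Su_0\|=\|g(t_1)\|=1$, so $\|S\|=1$ and $S$ attains its norm at $u_0$, while $\|u_0-x_0\|<\varepsilon'<\varepsilon$. Moreover, for every $x\in B_X$,
\[
\|Sx-Tx\|\le\bigl\|(\widetilde S-\widetilde T)x\bigr\|_\infty+\bigl\|\varphi(t_1)Tx-Tx\bigr\|<\varepsilon'+\bigl(1-\varphi(t_1)\bigr)\|Tx\|<\varepsilon'+\varepsilon'=\varepsilon,
\]
so $\|S-T\|<\varepsilon$, and $(X,Y)$ has the BPBp with the function $\varepsilon\mapsto\eta(\varepsilon)$. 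I expect the only genuinely delicate point to be the choice of the evaluation point $t_1$: one needs both that the norm of $\widetilde S u_0$ is \emph{attained} on $L$ --- which is exactly where ``vanishing at infinity'' is used --- and that it can be attained only where $\varphi$ is close to $1$, because $\widetilde S u_0$ is uniformly close to the function $t\mapsto\varphi(t)Tu_0$, whose norm profile is $\varphi(t)\,\|Tu_0\|$.
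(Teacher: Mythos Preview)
Your proof is correct and follows essentially the same strategy as the paper's: embed $T$ into $\mathcal{L}(X,\mathcal{C}_0(L,Y))$ by multiplying by a compactly supported bump $\varphi$ with $\varphi(t_0)=1$, apply the BPBp there with $\varepsilon'=\varepsilon/2$, evaluate the resulting $\widetilde S$ at a point $t_1$ where $\widetilde S u_0$ attains its norm, and use $\|\widetilde S-\widetilde T\|<\varepsilon'$ to force $\varphi(t_1)>1-\varepsilon'$, whence $\|\delta_{t_1}\circ\widetilde S-T\|<\varepsilon$. The only notable addition is your explicit justification, via the vanishing-at-infinity condition, that $t\mapsto\|\widetilde S u_0(t)\|$ actually attains its supremum on $L$---a point the paper simply asserts.
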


\begin{proof}
    Suppose that $(X,\mathcal{C}_0(L,Y))$ has the BPBp with a function $\varepsilon \mapsto \eta(\varepsilon)$. Fix $\varepsilon>0$ and consider $T \in S_{\mathcal{L}(X,Y)}$ and $x_0 \in S_X$ such that
    $$\left\| T x_0 \right\| > 1-\eta\left(\frac{\varepsilon}{2}\right).$$

    Let $t_0 \in L$. Since $L$ is locally compact, there exist a compact set $K$ such that $t_0 \in \interior K \subset K$. Define $U = \interior K$. By hypothesis, $L$ is a completely regular space. Hence, there is a continuous function $\phi: L\rightarrow [0,1]$ such that $\phi(t_0)=1$ and $\phi(t)=0$ for every $t \in L\setminus U$.

    Consider the bounded linear operator $R: X \rightarrow \mathcal{C}_0(L, Y)$ defined by $Rx(t)=\phi(t)Tx$ for all $x\in X$ and $t\in L$. Thus,
    $$\|R\| = \sup_{x \in B_X} \sup_{t \in L} \left\| \phi(t)Tx \right\| = \sup_{t \in L} |\phi(t)| \cdot \sup_{x \in B_X} \| Tx \| = 1.$$
    Notice that 
    $$\left\| R x_0(t) \right\| = |\phi(t)| \| T x_0 \| \leq \| T x_0 \|.$$ Since $ \left\| R x_0(t_0) \right\| = \left\| T x_0 \right\|$, we have
    $$\left\| R x_0 \right\| = \left\| T x_0 \right\| > 
    1-\eta\left(\frac{\varepsilon}{2}\right).$$
    By the assumption, there exist $S \in S_{\mathcal{L}(X,\mathcal{C}_0(L,Y))}$ and $u_0 \in S_X$ satisfying
    $$\|Su_0\| = 1, \quad \|u_0 - x_0\| < \frac{\varepsilon}{2} \quad \textnormal{ and } \quad \|S - R\| < \frac{\varepsilon}{2}.$$
    In particular, there is $t_1\in L$ such that $\| S u_0(t_1) \| = \| Su_0 \| = 1$. 
    Now, define the bounded linear operator $\widetilde {S}: X \rightarrow Y$ by $\widetilde{S}=\delta_{t_1} \circ S$. Hence, 
    $$\|\widetilde{S}\| = \sup_{x \in B_X} \left\| Sx(t_1) \right\| =  \| S u_0(t_1) \| = \| \widetilde{S}u_0 \| = 1.$$
    Besides that,
    \begin{align*}
        1-\left\|R u_0(t_1)\right\| &= \|S u_0(t_1)\|-\|R u_0(t_1)\|\\
        &\leq \|S u_0(t_1)- R u_0(t_1)\|\\
        &\leq \left\| S-R\right\|<\frac{\varepsilon}{2}.
        \end{align*}
    This implies that $1-\frac{\varepsilon}{2}<\left\|Ru_0(t_1)\right\|$. That is, $\phi(t_1)Tu_0\neq 0$. The only possibility is that $t_1\in U$, and
    $$1-\frac{\varepsilon}{2} < \left\|Ru_0(t_1)\right\| = \left\|\phi(t_1)Tu_0\right\| \leq \phi(t_1).$$
    Finally, if $x\in B_X$ then
    \begin{align*}
    \left\|\widetilde{S}x-Tx\right\| &= \left\|(\delta_{t_1}\circ S)x-Tx\right\|\\
    &\leq \left\| Sx(t_1)-Rx(t_1)\right\|+\left\| Rx(t_1)-Tx\right\|\\
    &\leq \left\| S-R\right\|+\left\lvert \phi(t_1)-1\right\rvert\left\| Tx\right\|\\
    &\leq \left\| S-R\right\|+1-\phi(t_1)\\
    &<\left\| S-R\right\|+\frac{\varepsilon}{2}.
    \end{align*}
    Therefore, $\left\| \widetilde{S}-T\right\| \leq \left\| S-R\right\| + \frac{\varepsilon}{2}<\varepsilon$, which completes our proof.
\end{proof}

As in the case of Theorem \ref{Prop2.8}, the converse of the above result is false, by the same argument presented after Theorem \ref{Prop2.8}. Nevertheless, we obtain the following positive result.

\begin{theorem}\label{t2}
    Let $X$, $Y$ be Banach spaces and let $L$ be a locally compact Hausdorff space. If the pair $(X,Y)$ has the generalized AHSP and $\mathcal{L}(X,\mathcal{C}_0(L,Y))=\mathcal{K}(X,\mathcal{C}_0(L,Y))$, then the pair $(X, \mathcal{C}_0(L,Y))$ has the generalized AHSP. Equivalently, if the pair $(\ell_1(X),Y)$ has the BPBp and $\mathcal{L}(X,\mathcal{C}_0(L,Y))=\mathcal{K}(X,\mathcal{C}_0(L,Y))$, then the pair $(\ell_1(X), \mathcal{C}_0(L,Y))$ has the BPBp.
\end{theorem}

\begin{proof}
 Suppose that $(X,Y)$ has the generalized AHSP with a function $\varepsilon \mapsto \eta(\varepsilon)$. Fix $\varepsilon>0$ and consider sequences $(T_k)^{n}_{k=1}\subset B_{\mathcal{L}(X, \mathcal{C}_0(L,Y))}$, $(x_k)^{n}_{k=1}\subset B_{X}$ and a convex sum $\sum^{n}_{k=1}\alpha_k$ such that
    \begin{equation*}
        \left\| \sum_{k=1}^{n} \alpha_k T_k x_k \right\| >1-\eta\left(\frac{\varepsilon}{4}\right).
    \end{equation*}
Then, there is $t_0\in L$ such that
 \begin{equation}\label{i1}
        \left\| \sum_{k=1}^{n} \alpha_k T_k x_k(t_0) \right\|>1-\eta\left(\frac{\varepsilon}{4}\right).
    \end{equation}
    
For every $k\in \{1, \ldots, n \}$, we consider the bounded linear operator $R_k:X\rightarrow Y$ defined by $R_k=\delta_{t_0}\circ T_k$. Notice that $ \left\| R_k\right\|\leq 1$ for all $k\in \{1, \ldots, n \}$. By (\ref{i1}), 
    \begin{equation*}
         \left\| \sum_{k=1}^{n} \alpha_k R_k x_k \right\|=\left\| \sum_{k=1}^{n} \alpha_k  (\delta_{t_0}\circ T_k)x_k \right\|=\left\| \sum_{k=1}^{n} \alpha_k   T_kx_k(t_0) \right\|>1-\eta\left(\frac{\varepsilon}{4}\right).
    \end{equation*}
By the assumption, there exist a subset $A\subset \{1, \ldots, n \}$, $y^*\in S_{Y^*}$, and sequences $(S_k)^{n}_{k=1}\subset S_{\mathcal{L}(X,Y)}$ and $(z_k)^{n}_{k=1}\subset S_X$ satisfying the following:
    \begin{enumerate}
    \item[(1)] $\sum_{k \in A} \alpha_k > 1 - \frac{\varepsilon}{4}$,
    \item[(2)] $\|z_k - x_k\| < \frac{\varepsilon}{4}$ and $\|S_k - R_k\| < \frac{\varepsilon}{4}$ for all $k \in A$,
    \item[(3)] $y^*(S_k z_k) = 1$ for every $k \in A$.
    \end{enumerate}
We choose $\beta>0$ such that $\left\| R_k-S_k\right\|<\beta<\frac{\varepsilon}{4}$ for all $k\in A$, and consider $\delta=\frac{\varepsilon}{4}-\beta$. By the hypothesis, $\bigcup_{k\in A}\overline{T_k(B_X)}$ is a compact set. Then, for each $k\in A$, there exist $\widetilde{x}^k_1, \ldots, \widetilde{x}^k_{m_k} \in B_X$ such that $\bigcup_{k\in A}\overline{T_k(B_X)}\subset \bigcup_{k\in A}\bigcup^{m_k}_{j=1}B(T_k\widetilde{x}^k_j, \delta)$, where $B(T_k\widetilde{x}_j^k, \delta)$ denotes the open ball with center $T_k\widetilde{x}_j^k$ and radius $\delta$. We define the set
\begin{equation*}
    U=\bigcap_{k\in A}\bigcap^{m_k}_{j=1}\left\{t\in L:\, \left\| T_k\widetilde{x}^k_j(t)-S_k\widetilde{x}^k_j\right\|<\beta \right\}.    
\end{equation*}
As $U$ is a finite intersection of open subsets of $L$, then it is open. Furthermore, for every $k\in A$ and $j\in \{1,\ldots, m_k\}$, we see that 
$$\left\| T_k\widetilde{x}^k_j(t_0)-S_k\widetilde{x}^k_j\right\|=\left\| (\delta_{t_0}\circ T_k)\widetilde{x}^k_j-S_k\widetilde{x}^k_j\right\|=\left\| R_k\widetilde{x}^k_j-S_k\widetilde{x}^k_j\right\|\leq \left\| R_k-S_k\right\|<\beta.$$ Which implies that $t_0\in U$. Thus, there exists a compact set $K$ such that $t_0\in \interior K \subset K\subset U$. We consider $V=\interior K$. There is a continuous function $\phi: L \rightarrow [0,1]$ such that $\phi(t_0)=1$ and $\phi(t )=0$ if $t\in L\setminus V$.

For every $k\in A$, we define the bounded linear operator $\widetilde {S}_k: X\rightarrow \mathcal{C}_0(L,Y)$ by $$\widetilde {S}_kx=\phi S_kx+(1-\phi)T_kx.$$
We claim that $\left\| \widetilde {S}_k\right\|=1$ for all $k\in A$. Indeed, given $t\in L$ and $x\in B_X$, we have
 \begin{align*}
   \left\| \widetilde {S}_kx(t)\right\| &= \left\| \phi(t) S_kx+(1-\phi(t))T_kx(t)\right\|\\
   &\leq \phi(t)\left\| S_kx\right\|+(1-\phi(t))\left\| T_kx\right\|\\
   &\leq \phi(t)+1-\phi(t)=1
 \end{align*}
for all $k\in A$. We define the functional $\psi:\mathcal{C}_0(L,Y)\rightarrow \mathbb{K}$ by $\psi=y^*\circ \delta_{t_0}$. Notice that $\|\psi\| \leq 1$. 
Furthermore, 
\begin{align*}
\psi(\widetilde{S}_kz_k) &=y^*(\widetilde{S}_kz_k(t_0))\\
&=y^*(\phi(t_0)S_kz_k+(1-\phi(t_0))T_kz_k(t_0))\\
&=y^*(S_kz_k)=1
\end{align*}
for all $k\in A$. Therefore, $\left\| \psi\right\|=1$. We conclude that $\left\| \widetilde{S}_k\right\|=1$ for all $k\in A$ and also that the third condition of the Definition \ref{gahsp} is satisfied.

To finish the proof, let $x\in B_X$. Hence, for each $k\in A$, there is $j\in \{1, \ldots, m_k \}$ such that $\left\| T_kx-T_k\widetilde{x}^k_j\right\|<\delta$. If $t\in V$, then $\left\| T_k\widetilde{x}_j^k(t)-S_k\widetilde{x}_j^k\right\|<\beta$ for all $k\in A$ and for all $j\in \{1, \ldots, m_k \}$. This implies that

 \begin{align*}
     \left\| T_kx(t)-\widetilde{S}_kx(t)\right\| &= \left\| T_kx(t)-(\phi(t) S_kx+(1-\phi(t))T_kx(t))\right\|\\
     &= \left\| T_kx(t)-\phi(t) S_kx-T_kx(t)+\phi(t)T_kx(t)\right\|\\
     &= \left\| \phi(t)(T_kx(t)-S_kx)\right\|\\
     &\leq \left\| T_kx(t)-S_kx\right\|\\
     &\leq \left\| T_kx(t)-T_k\widetilde{x}^k_j(t)\right\|+\left\| T_k\widetilde{x}^k_j(t)-S_k\widetilde{x}^k_j\right\| +\left\| S_k\widetilde{x}^k_j-R_k\widetilde{x}^k_j\right\|\\
     &\hspace{0.4cm}+\left\| R_k\widetilde{x}^k_j-R_kx\right\|+\left\| R_kx-S_kx\right\|\\
     &< \left\| T_kx-T_k\widetilde{x}^k_j\right\|+\beta+ \left\| S_k-R_k\right\|+\left\| T_k\widetilde{x}^k_j(t_0)-T_kx(t_0)\right\|+\left\| S_k-R_k\right\|\\
     &< 2\delta+3\beta\\
     &= 2\left(\frac{\varepsilon}{4}-\beta \right)+3\beta=\frac{\varepsilon}{2}+\beta<\frac{3\varepsilon}{4}.
 \end{align*}
On the other hand, if $t\in L\setminus V$, then $\phi(t)=0$. In this case, $\left\| T_kx(t)-\widetilde{S}_kx(t)\right\|=0$. Thus, $\left\| T_k-\widetilde{S}_k\right\|\leq \frac{3\varepsilon}{4}<\varepsilon$ for all $k\in A$.
\end{proof}

An analogous result also holds for $\mathcal{C}_b(\Omega,Y)$ when $\Omega$ is a completely regular space. The proof is very similar and requires only minor modifications. The converse of this result remains an open question.
    
\begin{theorem}
    Let $X$, $Y$ be Banach spaces and let $\Omega$ be a completely regular space. If the pair $(X,Y)$ has the generalized AHSP and $\mathcal{L}(X,\mathcal{C}_b(\Omega,Y))=\mathcal{K}(X,\mathcal{C}_b(\Omega,Y))$, then the pair $(X, \mathcal{C}_b(\Omega,Y))$ has the generalized AHSP. Equivalently, if the pair $(\ell_1(X),Y)$ has the BPBp and $\mathcal{L}(X,\mathcal{C}_b(\Omega,Y))=\mathcal{K}(X,\mathcal{C}_b(\Omega,Y))$, then the pair $(\ell_1(X), \mathcal{C}_b(\Omega,Y))$ has the BPBp.
\end{theorem}

\

\noindent\textbf{Competing Interests} \ The authors report there are no competing interests to declare.

\

\noindent\textbf{Funding} \ The second author was partially supported by FAPEMIG RED-00133-21 and FAPEMIG APQ-01853-23.

\

%%%%

\end{document}